\documentclass[11pt]{article}

\usepackage[margin=1.2in]{geometry}

\usepackage{amsmath,amsthm,amsfonts,amssymb,latexsym}
\usepackage{hyperref}
\usepackage{comment}
\usepackage{enumerate}
\usepackage{color}
\usepackage[dvipsnames]{xcolor}

\usepackage[shortlabels]{enumitem}
\usepackage{tikz}

\usepackage{xparse,etoolbox}

\usepackage{sectsty}
\allsectionsfont{\centering}

\usepackage{parskip}
\makeatletter
\def\thm@space@setup{%
  \thm@preskip=\parskip \thm@postskip=0pt
}
\makeatother

\begin{document}

\theoremstyle{plain}

\newtheorem{thm}{Theorem}[section]

\newtheorem{lem}[thm]{Lemma}
\newtheorem{Problem B}[thm]{Problem B}

\newtheorem{pro}[thm]{Proposition}
\newtheorem{conj}[thm]{Conjecture}
\newtheorem{cor}[thm]{Corollary}
\newtheorem{que}[thm]{Question}
\newtheorem{prob}[thm]{Problem}

\theoremstyle{definition}
\newtheorem{rem}[thm]{Remark}
\newtheorem{defin}[thm]{Definition}
\newtheorem{hyp}[thm]{Hypothesis}
\newtheorem{exam}[thm]{Example}

\theoremstyle{plain}
\newtheorem*{thmA}{Theorem A}
\newtheorem*{thmB}{Theorem B}
\newtheorem*{corB}{Corollary B}
\newtheorem*{thmC}{Theorem C}
\newtheorem*{thmD}{Theorem D}
\newtheorem*{thmE}{Theorem E}
 
\newtheorem*{thmAcl}{Main Theorem$^{*}$}
\newtheorem*{thmBcl}{Theorem B$^{*}$}
\newcommand{\dd}{\mathrm{d}}

\theoremstyle{plain}
\newtheorem{theoA}{Theorem}

\theoremstyle{plain}
\newtheorem{probA}[theoA]{Problem}

\theoremstyle{plain}
\newtheorem{condA}[theoA]{Condition}

\theoremstyle{plain}
\newtheorem{paraA}[theoA]{Parametrisation}

\theoremstyle{plain}
\newtheorem{corA}[theoA]{Corollary}

\renewcommand{\thetheoA}{\Alph{theoA}}

\renewcommand{\thecorA}{\Alph{corA}}

\newcommand{\wh}[1]{\widehat{#1}} 
\newcommand{\miquelcomment}{\textcolor{blue}}
\newcommand{\ncomment}{\textcolor{magenta}}

\newcommand{\Maxn}{\operatorname{Max_{\textbf{N}}}}
\newcommand{\Syl}{\operatorname{Syl}}
\newcommand{\Hall}{\operatorname{Hall}}
\newcommand{\Lin}{\operatorname{Lin}}
\newcommand{\U}{\mathbf{U}}
\newcommand{\nav}{\mathrm{Nav}}
\newcommand{\R}{\mathbf{R}}
\newcommand{\dl}{\operatorname{dl}}
\newcommand{\Con}{\operatorname{Con}}
\newcommand{\rdz}{\operatorname{rdz}}
\newcommand{\rdzo}{\operatorname{rdz}^{\circ}}
\newcommand{\cl}{\operatorname{cl}}
\newcommand{\Stab}{\operatorname{Stab}}
\newcommand{\Aut}{\operatorname{Aut}}
\newcommand{\Ker}{\operatorname{Ker}}
\newcommand{\InnDiag}{\operatorname{InnDiag}}
\newcommand{\fl}{\operatorname{fl}}
\newcommand{\irr}{\operatorname{Irr}}
\newcommand{\ibr}{\operatorname{IBr}}
\newcommand{\FF}{\mathbb{F}}
\newcommand{\CL}{\mathfrak{Cl}}
\newcommand{\EE}{\mathbb{E}}
\newcommand{\Alp}{\mathrm{Alp}}
\newcommand{\Alpr}{\mathrm{Alp_r}}
\newcommand{\normal}{\unhld}
\newcommand{\sn}{\normal\normal}
\newcommand{\Bl}{\mathrm{Bl}}
\newcommand{\NN}{\mathbb{N}}

\newcommand{\N}{\mathbf{N}}
\newcommand{\bfC}{\mathbf{C}}
\newcommand{\bfO}{\mathbf{O}}
\newcommand{\bfF}{\mathbf{F}}
\newcommand{\Irr}{\mathrm{Irr}}
\def\GGG{{\mathcal G}}
\def\HHH{{\mathcal H}}
\def\HH{{\mathcal H}}
\def\irra#1#2{{\rm Irr}_{#1}(#2)}

\renewcommand{\labelenumi}{\upshape (\roman{enumi})}

\newcommand{\PSL}{\operatorname{PSL}}
\newcommand{\PSU}{\operatorname{PSU}}
\newcommand{\alt}{\operatorname{Alt}}

\providecommand{\V}{\mathrm{V}}
\providecommand{\E}{\mathrm{E}}
\providecommand{\ir}{\mathrm{Irm_{rv}}}
\providecommand{\Irrr}{\mathrm{Irm_{rv}}}
\providecommand{\re}{\mathrm{Re}}

\numberwithin{equation}{section}
\def\irrp#1{{\rm Irr}_{p'}(#1)}

\def\ibrrp#1{{\rm IBr}_{\Bbb R, p'}(#1)}
\def\C{{\mathbb C}}

\def\isoc{{\succeq_c}}

\def\isob{{\succeq_b}}

\newcommand{\wt}[1]{\widetilde{#1}} 

\def\Rad{{\rm Rad}}
\def\Rado{{\rm Rad}^{\circ}}

\def\o{{\bf O}}
\def\c{{\bf C}}
\def\n{{\bf N}}
\def\z{{\bf Z}}
\def\F{{\bf F}}
\def\I{{\bf I}}
\def\P{{\mathcal{P}}}
\def\Q{{\mathcal{Q}}}
\def\R{{\mathcal{R}}}
\def\W{{\mathcal{W}}}
\def\D{{\mathcal{D}}}
\def\Wr{{\mathcal{W}_{\rm r}}}
\def\supp{{\rm{Supp}}}

\def\irr#1{{\rm Irr}(#1)}
\def\irrp#1{{\rm Irr}_{p^\prime}(#1)}
\def\irrq#1{{\rm Irr}_{q^\prime}(#1)}
\def \aut#1{{\rm Aut}(#1)}
\def\cent#1#2{{\bf C}_{#1}(#2)}
\def\norm#1#2{{\bf N}_{#1}(#2)}
\def\zent#1{{\bf Z}(#1)}
\def\syl#1#2{{\rm Syl}_#1(#2)}
\def\normal{\unlhd}
\def\oh#1#2{{\bf O}_{#1}(#2)}
\def\Oh#1#2{{\bf O}^{#1}(#2)}
\def\det#1{{\rm det}(#1)}
\def\gal#1{{\rm Gal}(#1)}
\def\ker#1{{\rm ker}(#1)}
\def\normalm#1#2{{\bf N}_{#1}(#2)}
\def\alt#1{{\rm Alt}(#1)}
\def\iitem#1{\goodbreak\par\noindent{\bf #1}}
   \def \mod#1{\, {\rm mod} \, #1 \, }
\def\sbs{\subseteq}

\def\gc{{\bf GC}}
\def\ngc{{non-{\bf GC}}}
\def\ngcs{{non-{\bf GC}$^*$}}
\newcommand{\notd}{{\!\not{|}}}
\newcommand{\Bpi}{\mathbf{B}_\pi}
\newcommand{\Ipi}{\mathbf{I}_\pi}
\newcommand{\Z}{\mathbf{Z}}

\newcommand{\bG}{\mathbf{G}}
\newcommand{\bL}{\mathbf{L}}
\newcommand{\bH}{\mathbf{H}}
\newcommand{\bM}{\mathbf{M}}

\newcommand{\ty}[1]{\mathsf{#1}}

\newcommand{\cE}{\mathscr{E}}

\newcommand{\Out}{{\mathrm {Out}}}
\newcommand{\Mult}{{\mathrm {Mult}}}
\newcommand{\Inn}{{\mathrm {Inn}}}
\newcommand{\Fong}{{\mathrm{Fong}}}
\newcommand{\IBRL}{{\mathrm {IBr}}_{\ell}}
\newcommand{\IBRP}{{\mathrm {IBr}}_{p}}
\newcommand{\bl}{{\mathrm{bl}}}
\newcommand{\cd}{\mathrm{cd}}
\newcommand{\ord}{{\mathrm {ord}}}
\def\id{\mathop{\mathrm{ id}}\nolimits}
\renewcommand{\Im}{{\mathrm {Im}}}
\newcommand{\Ind}{{\mathrm {Ind}}}
\newcommand{\diag}{{\mathrm {diag}}}
\newcommand{\soc}{{\mathrm {soc}}}
\newcommand{\End}{{\mathrm {End}}}
\newcommand{\sol}{{\mathrm {sol}}}
\newcommand{\Hom}{{\mathrm {Hom}}}
\newcommand{\Mor}{{\mathrm {Mor}}}
\newcommand{\Mat}{{\mathrm {Mat}}}
\def\rank{\mathop{\mathrm{ rank}}\nolimits}
\newcommand{\Tr}{{\mathrm {Tr}}}
\newcommand{\tr}{{\mathrm {tr}}}
\newcommand{\Gal}{{\rm Gal}}
\newcommand{\Spec}{{\mathrm {Spec}}}
\newcommand{\ad}{{\mathrm {ad}}}
\newcommand{\Sym}{{\mathrm {Sym}}}
\newcommand{\Char}{{\mathrm {Char}}}
\newcommand{\pr}{{\mathrm {pr}}}
\newcommand{\rad}{{\mathrm {rad}}}
\newcommand{\abel}{{\mathrm {abel}}}
\newcommand{\PGL}{{\mathrm {PGL}}}
\newcommand{\PCSp}{{\mathrm {PCSp}}}
\newcommand{\PGU}{{\mathrm {PGU}}}
\newcommand{\codim}{{\mathrm {codim}}}
\newcommand{\ind}{{\mathrm {ind}}}
\newcommand{\Res}{{\mathrm {Res}}}
\newcommand{\Lie}{{\mathrm {Lie}}}
\newcommand{\Ext}{{\mathrm {Ext}}}
\newcommand{\EBr}{{\mathrm {EBr}}}
\newcommand{\Alt}{{\mathrm {Alt}}}
\newcommand{\AAA}{{\sf A}}
\newcommand{\SSS}{{\sf S}}
\newcommand{\DDD}{{\sf D}}
\newcommand{\QQQ}{{\sf Q}}
\newcommand{\CCC}{{\sf C}}
\newcommand{\SL}{{\mathrm {SL}}}
\newcommand{\Sp}{{\mathrm {Sp}}}
\newcommand{\PSp}{{\mathrm {PSp}}}
\newcommand{\SU}{{\mathrm {SU}}}
\newcommand{\GL}{{\mathrm {GL}}}
\newcommand{\GU}{{\mathrm {GU}}}
\newcommand{\Br}{{\mathrm{Br}}}
\newcommand{\Spin}{{\mathrm {Spin}}}
\newcommand{\CC}{{\mathbb C}}
\newcommand{\CB}{{\mathbf C}}
\newcommand{\RR}{{\mathbb R}}
\newcommand{\QQ}{{\mathbb Q}}
\newcommand{\ZZ}{{\mathbb Z}}
\newcommand{\bfN}{{\mathbf N}}
\newcommand{\bfZ}{{\mathbf Z}}
\newcommand{\PP}{{\mathbb P}}
\newcommand{\cG}{{\mathcal G}}
\newcommand{\cH}{{\mathcal H}}
\newcommand{\cQ}{{\mathcal Q}}
\newcommand{\GA}{{\mathfrak G}}
\newcommand{\cT}{{\mathcal T}}
\newcommand{\cL}{{\mathcal L}}
\newcommand{\IBr}{\mathrm{IBr}}
\newcommand{\cS}{{\mathcal S}}
\newcommand{\cR}{{\mathcal R}}
\newcommand{\GCD}{\GC^{*}}
\newcommand{\TCD}{\TC^{*}}
\newcommand{\FD}{F^{*}}
\newcommand{\GD}{G^{*}}
\newcommand{\HD}{H^{*}}
\newcommand{\GCF}{\GC^{F}}
\newcommand{\TCF}{\TC^{F}}
\newcommand{\PCF}{\PC^{F}}
\newcommand{\GCDF}{(\GC^{*})^{F^{*}}}
\newcommand{\RGTT}{R^{\GC}_{\TC}(\theta)}
\newcommand{\RGTA}{R^{\GC}_{\TC}(1)}
\newcommand{\Om}{\Omega}
\newcommand{\eps}{\epsilon}
\newcommand{\varep}{\varepsilon}
\newcommand{\dz}{\mathrm{dz}}
\newcommand{\dzo}{\mathrm{dz}^\circ}
\newcommand{\Co}{\mathcal{C}^\circ}
\newcommand{\al}{\alpha}

\newcommand{\chis}{\chi_{s}}
\newcommand{\sigmad}{\sigma^{*}}
\newcommand{\PA}{\boldsymbol{\alpha}}
\newcommand{\gam}{\gamma}
\newcommand{\lam}{\lambda}
\newcommand{\la}{\langle}
\newcommand{\genf}{F^*}
\newcommand{\ra}{\rangle}
\newcommand{\hs}{\hat{s}}
\newcommand{\htt}{\hat{t}}
\newcommand{\tG}{\hat G}
\newcommand{\St}{\mathsf {St}}
\newcommand{\bfs}{\boldsymbol{s}}
\newcommand{\bfl}{\boldsymbol{\lambda}}
\newcommand{\tn}{\hspace{0.5mm}^{t}\hspace*{-0.2mm}}
\newcommand{\ta}{\hspace{0.5mm}^{2}\hspace*{-0.2mm}}
\newcommand{\tb}{\hspace{0.5mm}^{3}\hspace*{-0.2mm}}
\def\skipa{\vspace{-1.5mm} & \vspace{-1.5mm} & \vspace{-1.5mm}\\}
\newcommand{\tw}[1]{{}^#1\!}
\renewcommand{\mod}{\bmod \,}

\let\ti=\times
\let\la=\lambda
\let\eps=\epsilon

\marginparsep-0.5cm

\newcommand{\blocktheorem}[1]{%
  \csletcs{old#1}{#1}
  \csletcs{endold#1}{end#1}
  \RenewDocumentEnvironment{#1}{o}
    {\par\addvspace{1.5ex}
     \noindent\begin{minipage}{\textwidth}
     \IfNoValueTF{##1}
       {\csuse{old#1}}
       {\csuse{old#1}[##1]}}
    {\csuse{endold#1}
     \end{minipage}
     \par\addvspace{1.5ex}}
}

\blocktheorem{theoA}
\blocktheorem{probA}

\makeatletter
\def\blfootnote{\gdef\@thefnmark{}\@footnotetext}
\makeatother

\title{{\bf{\huge The Isaacs--Navarro--Wolf conjecture}}}

\author{Damiano Rossi}
\date{}

\blfootnote{\emph{$2020$ Mathematical Subject Classification:} $20$C$15$, $20$D$10$
\\
\emph{Key words and phrases:} Isaacs--Navarro--Wolf conjecture, non-vanishing elements, Fitting subgroup, solvable linear groups, artificial intelligence, LLMs.
}

\maketitle

\begin{abstract}
The Isaacs--Navarro--Wolf conjecture states that if $G$ is a finite solvable group and $x$ is an element of $G$ such that $\chi(x)$ does not vanish for all irreducible characters $\chi$ of $G$, then $x$ must be contained in some nilpotent normal subgroup. In this paper we present a proof of the Isaacs--Navarro--Wolf conjecture that was discovered with the use of artificial intelligence systems. 
\end{abstract}

\section{Introduction}

In the recent past, the use of artificial intelligence in mathematical research has increasingly been exploited to generate proofs and settle open problems and conjectures. AI systems and LLMs, which until recently were merely able to solve relatively elementary mathematical questions, have become surprisingly proficient at research-level problems. Most remarkably, OpenAI has recently announced a collection of ten results resolving or making substantial progress on long-standing open problems in several areas of mathematics and theoretical computer science \cite{OpenAI}, including the construction of the first example of a non-sofic group. Similarly, Anthropic has produced an improved bound related to the Riemann zeta function \cite{Anthropic}. These are just a few examples from a rapidly growing collection of results obtained by deploying artificial intelligence to tackle open problems in mathematics (we also mention \cite{Nav-Ser26} as a recent example from the representation theory of finite groups). These recent developments have led to discussions about the possible (yet now seemingly inevitable) consequences and implications for mathematical research, its community, and how the role and aims of mathematicians may change as these new systems become increasingly capable of tackling open problems (see, for instance, \cite{Kon25}, \cite{Avi26}, \cite{CJOT26}, \cite{Tao26}, and their references).

The present work originated as an experiment aimed at testing the capabilities of emerging artificial intelligence systems in representation theory of finite groups. The Isaacs--Navarro--Wolf conjecture was selected as a test case immediately after the proof of Gluck's conjecture was uploaded on the arXiv by Baoyu Zhang (see \cite{ProofGluck}). The two problems are known to be closely related due to a well-known, although insufficient, proof mechanism related to the theory of large orbits of solvable linear groups. Besides the possible feasibility of the experiment, the choice was also dictated by the author's curiosity to potentially settle a problem encountered in his Master's thesis \cite{Ros-Mon}. We therefore initiated a systematic attempt to prove the conjecture by using a combination of ChatGPT to discuss existing literature and known proof strategies as well as the coding agent Codex to generate computational experiments that would help the discussion with the AI system. The experiment turned out to be successful: after obtaining a quite unnatural proof, we spent considerable time verifying, understanding, and ultimately simplifying and rewriting its arguments in a clear and mathematically rigorous form. We mention that, while the present manuscript was being prepared for circulation, Quanfu Yan and Jiping Zhang independently posted another proof of the Isaacs--Navarro--Wolf conjecture \cite{ProofINW}. Thus, apparently independently, the same conjecture was selected as a target for an AI-assisted proof attempt. This is not surprising given the above discussion on the relation of the Isaacs--Navarro--Wolf conjecture with Gluck's conjecture.

We now move on to the actual mathematical content of the paper. Let $G$ be a finite group and denote by $\Irr(G)$ the set of its irreducible complex characters. An element $g\in G$ is said to be \emph{non-vanishing} if $\chi(g)\neq 0$ for every $\chi\in\Irr(G)$. In \cite{INW}, Isaacs, Navarro and Wolf introduced the study of non-vanishing elements and provided evidence that every non-vanishing element of a finite solvable group belongs to a normal nilpotent subgroup. In particular they proved this statement for all elements of odd order and, more generally, for solvable groups with abelian Sylow $2$-subgroups. This and related problems have subsequently been studied both for solvable and for arbitrary finite groups by several authors (see, for instance, \cite{Mor-Wol04}, \cite{BDS}, \cite{DPSS}, \cite{DPSSII}, \cite{DNPST}, \cite{Wolf14}, \cite{Wolf20}).

The study of a minimal counterexample to the Isaacs--Navarro--Wolf conjecture naturally leads to the study of related questions about solvable linear groups. This was already explained in their seminal paper. The structure of a minimal counterexample was further reduced by work of Wolf (see \cite{Wolf14} and \cite{Wolf20}) where the quasi-primitive case was settled. The proof presented here ultimately builds on these reductions and handles the imprimitive case using a series of reductions (see Theorem \ref{thm:Reduction to primitive local action} and Theorem \ref{thm:Coherence Theorem}) and by ultimately verifying a condition (see Definition \ref{def:Coherence}) for a finite list of possible cases arising in the minimal counterexample setting (see Theorem \ref{thm:Verification of coherence}). This ultimately leads to a proof of the following result.

\begin{theoA}
\label{thm:Main}
Let $G$ be a finite solvable group and $g$ an element of $G$ such that $\chi(g)$ does not vanish for every irreducible character $\chi$ of $G$. Then $g$ is contained in a normal nilpotent subgroup of $G$.
\end{theoA}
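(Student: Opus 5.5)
The plan is to argue by minimal counterexample, following the reduction scheme of Isaacs--Navarro--Wolf \cite{INW}. Take $G$ solvable and $g\in G$ non-vanishing with $g\notin \bfF(G)$, and choose $|G|$ minimal. First I use standard facts: non-vanishing elements map to non-vanishing elements in quotients, and $\bfF(G/N)=\bfF(G)/N$ whenever $N$ is contained in the Frattini subgroup. These give $\Phi(G)=1$. The same arguments then show that $\bfF(G)=V$ is the unique minimal normal subgroup; more precisely, $V$ is a faithful irreducible module over some field $\FF_p$ for $H=G/V$, and $G=V\rtimes H$. The coset $gV$ is non-vanishing in $H$, so by minimality $gV$ lies in $\bfF(H)$. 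We may replace $g$ by a suitable power, of prime order $q$ modulo $V$. The case $q=p$ is then excluded, since $\bfO_p(H)=1$.

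Next comes the translation into orbit language. By Clifford theory, $\chi(g)\neq 0$ for all $\chi$ lying over $\lambda\in\Irr(V)=\widehat V$. Using the standard character formula over stabilizers, this forces: for every $\lambda\in\widehat V$, the element $g$ cannot act fixed-point-freely on the relevant stabilizer data. Concretely, every $H$-orbit on $\widehat V$ must contain a $\lambda$ fixed by a conjugate of $gV$. The conjecture then reduces to showing that this fails for a suitable $\lambda$. That is, I must produce $\lambda\in\widehat V$ such that no element of $g^H$ fixes any character in the orbit of $\lambda$, or at least such that some $\chi$ over $\lambda$ vanishes at $g$. The odd-order/abelian Sylow case from \cite{INW} and Wolf's work \cite{Wolf14,Wolf20} settle the case where $V$ is quasi-primitive. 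So I may assume $V$ is imprimitive: $V=\mathrm{Ind}_K^H W$ with $W$ a primitive $K$-module and $H\le K_0\wr S$ for a transitive permutation group $S$ on $n$ blocks.

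In the imprimitive case I would proceed by reductions, which I expect the paper packages as Theorem \ref{thm:Reduction to primitive local action} and Theorem \ref{thm:Coherence Theorem}. First, choose a block system and reduce to the situation where $g$ acts nontrivially on the blocks, or else acts within blocks. The latter is handled by induction using the primitive local action. Then I build $\lambda=(\lambda_1,\dots,\lambda_n)$ blockwise, selecting components from distinct regular-like orbits of the primitive component group. This makes the stabilizer of $\lambda$ in $H$ controlled, and prevents any conjugate of $g$ permuting blocks from fixing it. For this I would use large-orbit theorems for solvable permutation groups: a solvable transitive $S$ has a regular orbit on the power set, or on colorings with few colors (Seress/Gluck). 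The remaining task is a compatibility, or ``coherence'' (Definition \ref{def:Coherence}), between the choices in different blocks.

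The main obstacle is exactly this last step. The known large-orbit technique, which suffices for Gluck's conjecture, breaks down in small cases: when the primitive component has too few orbits of regular type (small $|W|$, e.g. $p=2,3$ with small dimension), or when $S$ lacks a regular orbit on colorings (the finitely many exceptional solvable permutation groups). I would first isolate the exceptional list through the reduction theorems. I would then verify coherence case by case (Theorem \ref{thm:Verification of coherence}), using explicit computation of the small primitive solvable linear groups and exceptional permutation groups. Finally, I assemble these into a contradiction with the minimality of $G$.
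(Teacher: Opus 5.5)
Your set-up matches the paper: the reduction to $G=V\rtimes H$ with $V$ faithful and irreducible, $gV\in\F(H)$, and the condition that every $H$-orbit on $\widehat V$ contains a character fixed by a conjugate of $g$. Quoting Wolf for the primitive case is also right. The gap is the imprimitive case, where your case split is backwards and the mechanism that actually produces the contradiction is missing.

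The situation in which conjugates of $g$ permute blocks never has to be treated. Here $E=\langle g^H\rangle$ is a normal $2$-subgroup, and a nontrivial normal $p$-subgroup of a faithful primitive permutation group acts regularly (Lemma~\ref{lem:Regular actions from abelian}). Combined with the fixed-point condition, this shows that nontrivial systems $\Omega$ with $E\le K_\Omega$ exist, and that a minimal one has $\n_H(W)$ primitive on each block (Theorem~\ref{thm:Reduction to primitive local action}). The situation you dismiss as ``handled by induction using the primitive local action'' is therefore the entire difficulty, and no induction is available there. Non-vanishing is not inherited by subgroups such as $\n_G(W)$, so nothing transfers information from $g$ to the local element $\pi_W(g)$ of $H_W$. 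Moreover, local triples satisfying the fixed-point condition genuinely exist; by Wolf's classification they occur for $W=\FF_{r^2}$ with $r$ a Mersenne prime, $|W|=5^2$, and $|W|=3^4$. The analysis also needs $g$ to act as an involution of $\F(H)$, which comes from the Isaacs--Navarro--Wolf argument. You only arrange prime order, and replacing $g$ by a power not coprime to its order is not justified as stated, since only Galois conjugates are automatically non-vanishing.

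For the same reason, building $\lambda=(\lambda_1,\dots,\lambda_n)$ blockwise via large-orbit theorems cannot work: the orbit condition alone is not contradictory. Let $P=\Gamma(\FF_9)\cong\mathrm{SD}_{16}$ act on $W=\FF_9$, with $\sigma\colon x\mapsto x^3$ and $\alpha$ a generator of $\FF_9^\times$. The non-central involutions $x\mapsto\alpha^{2i}x^3$, for $0\le i\le 3$, form one $P$-class, and their fixed spaces $\{0,\pm\alpha^{-i}\}$ cover $W$. Let $H=P\wr C_2$ act on $W\oplus W$, in the role of $\widehat V$. This action is faithful, irreducible and imprimitive with primitive local action, and $g=(\sigma,1)\in\F(H)$. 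Every vector is fixed by a conjugate of $(\sigma,1)$ lying in $P\times 1$, so the $\lambda$ you want does not exist.

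What rules this configuration out is an irreducible character of $H$ itself, i.e.\ of $G$ with $V$ in its kernel. Here it is the extension to $H$ of $\psi\times\psi$, with $\psi$ a faithful irreducible character of degree $2$ of $P$, which vanishes at $(\sigma,1)$. The paper produces such a character in general from an $H$-invariant linear character $\delta=\prod_W\delta_W$ of $E'$. For $\vartheta\in\Irr(E)$ over $\delta$ and $x\in E$ one has $\vartheta(q)=\vartheta(q^x)=\vartheta(q)\delta([q,x])$. So non-vanishing forces $\delta([y,z])=1$ for all conjugates $y,z$ of $q$. Since the local characters satisfy $\epsilon_W([a,b])=-1$ exactly when $E'_Wa\neq E'_Wb$, every $|\mathcal C(y,z)|$ must be even.

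Three further ingredients then give conjugates $y_0,z$ with $\mathcal C(y_0,z)=\{W_0\}$, a contradiction:
\begin{enumerate}
\item the fixed-point condition applied to vectors $\sum_W v_{W,a_W}$ with $0\neq v_{W,a}\in\c_W(a)$;
\item the local facts that distinct $a,b\in\supp(W)$ share no nonzero fixed vector and that $|\supp(W)'|\geq 2$;
\item a minimal-cover argument.
\end{enumerate}
This parity mechanism is what coherence means in the paper (Theorem~\ref{thm:Coherence Theorem}). It is not a compatibility between blockwise choices of $\lambda$, and your proposal cites the relevant results without supplying their content.
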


The paper is structured as follows: in Section \ref{sec:Reduction} we present a reduction theorem for imprimitive linear solvable groups to the case in which the block normalizer is primitive (this is a fairly standard reduction). In Section \ref{sec:Coherence} we introduce a condition that we call \textit{coherence} and is then used to produce the vanishing of a certain irreducible character (this is proved in Theorem \ref{thm:Coherence Theorem}). Then, in Section \ref{sec:Verification}, we show that the coherence condition from Definition \ref{def:Coherence} is satisfied under relevant assumptions that can be deduced in the presence of a minimal counterexample to the conjecture. In Section \ref{sec:Proof} we then obtain a final proof of Theorem \ref{thm:Main}.

\section{Reduction to primitive local actions}
\label{sec:Reduction}

A minimal counterexample to the Isaacs--Navarro--Wolf conjecture is known to lead to a setting in which a finite solvable group $H$ acts linearly and irreducibly on a finite vector space $V$. We will see this more precisely in the final proof in Section \ref{sec:Proof}. In this section we prove a reduction theorem for the case in which the action of $H$ on $V$ is imprimitive. First, we need an easy lemma.

\begin{lem}
\label{lem:Regular actions from abelian}
Let $H$ be a finite group acting primitively and faithfully on the set $\Omega$. If $1\neq P\unlhd H$ is a $p$-subgroup, then $P$ acts regularly on $\Omega$.
\end{lem}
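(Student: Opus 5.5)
The plan is to get regularity of $P$ in two steps: first for the center $Z=\zent{P}$, using that it is abelian, and then for $P$ itself, by showing that a point stabilizer in $P$ is normal in $H$.

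\textbf{Step 1: nontrivial normal subgroups are transitive.} Let $1\neq N\unlhd H$. The $N$-orbits on $\Omega$ are permuted by $H$, so they form an $H$-invariant partition of $\Omega$. By primitivity this partition is trivial.
\begin{itemize}
\item If every $N$-orbit is a singleton, then $N$ acts trivially on $\Omega$, and faithfulness forces $N=1$, a contradiction.
\item Hence $N$ is transitive.
\end{itemize}
The degenerate case $|\Omega|=1$ is excluded, since then faithfulness gives $H=1$, contradicting $P\neq 1$.

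\textbf{Step 2: the center $Z$ is regular.} Since $P$ is a nontrivial $p$-group, $Z=\zent{P}\neq 1$. As $Z$ is characteristic in $P\unlhd H$, we have $Z\unlhd H$, so $Z$ is transitive by Step 1.
\begin{itemize}
\item In a transitive abelian group, all point stabilizers are conjugate, hence equal.
\item So the stabilizer $Z_\omega$ fixes every point of $\Omega$, and faithfulness gives $Z_\omega=1$.
\item Therefore $Z$ is regular, and $H=ZH_\omega$ for any $\omega\in\Omega$.
\end{itemize}

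\textbf{Step 3: $P_\omega$ is normal in $H$.} Fix $\omega$. Since $P\unlhd H$, the stabilizer $P_\omega=P\cap H_\omega$ is normalized by $H_\omega$. It is also centralized, hence normalized, by $Z$, because $Z$ is central in $P$. Since $H=ZH_\omega$, it follows that $P_\omega\unlhd H$.

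\textbf{Step 4: $P_\omega=1$.} The subgroup $P_\omega$ fixes $\omega$. Being normal in $H$, it therefore fixes every $h\omega$ for $h\in H$. Since $H$ is transitive (because $Z\leq H$ is), $P_\omega$ fixes all of $\Omega$. By faithfulness, $P_\omega=1$.

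\textbf{Conclusion.} $P$ is transitive by Step 1, since $P\neq 1$ is normal, and semiregular by Step 4. Hence $P$ acts regularly on $\Omega$.

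The key step is Step 3, the normality of $P_\omega$ in $H$. It works only because the regular normal subgroup $Z$ centralizes $P$. This is exactly why we pass through the center rather than, say, a minimal normal subgroup of $H$ inside $P$, which need not commute with $P$.
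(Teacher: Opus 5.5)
Your proof is correct. Its first half matches the paper's argument exactly: $Z=\mathbf{Z}(P)$ is normal in $H$, hence transitive by primitivity, hence regular because it is abelian. The two proofs differ in how they pass from $Z$ to $P$.

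The paper notes that $P$ centralizes $Z$. It then cites the fact that a regular abelian subgroup of $\Sym(\Omega)$ is self-centralizing (Dixon--Mortimer, Theorem 4.2A). This gives $P\le \mathbf{C}_{\Sym(\Omega)}(Z)=Z$, so $P=Z$ is regular and, as a bonus, abelian.

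You instead show directly that $P$ is semiregular, via $P_\omega\unlhd ZH_\omega=H$, and get transitivity of $P$ from Step 1. So your argument is self-contained.

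Your Steps 3 and 4 only use that $Z$ is transitive, not that it is regular. Step 2 is therefore not actually needed, and your closing remark overstates what is required. The argument compresses to one line: if $g\in P_\omega$ and $z\in Z$, then $g(z\omega)=z(g\omega)=z\omega$, so $g$ fixes every point and $g=1$. This is precisely the semiregularity of the centralizer of a transitive group, which is the elementary part of the result the paper cites.

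If you keep Step 2, you also recover the paper's stronger conclusion $P=Z$ for free. Since $Z\le P$ are both regular, $|Z|=|\Omega|=|P|$.
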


\begin{proof}
First, let $1\neq Z:=\z(P)$ and observe that $Z$ is transitive on $\Omega$. Indeed $Z$ is normal in $H$ and hence the set of $Z$-orbits on $\Omega$ is an $H$-stable partition of $\Omega$. Since $H$ is primitive, we deduce that either $Z$ is transitive on $\Omega$, or every $Z$-orbit is a singleton. But the latter condition implies that $Z$ is contained in the center of the action of $H$ on $\Omega$, contradicting the faithfulness of the action. Next assume there exists some $\omega\in\Omega$ and $1\neq z\in\c_{Z}(\omega)$. Then, for every $x\in Z$, we deduce that 
\[(\omega^x)^z=(\omega^z)^x=\omega^x\]
and, since $Z$ is transitive on $\Omega$, we conclude that $Z$ acts trivially on $\Omega$, again contradicting the hypothesis. Finally, observe that $Z$ is self-centralizing in $\Sym(\Omega)$ thanks to \cite[Theorem 4.2A (v)]{Dix-Mor96}. Since $P$ centralizes $Z$, we deduce that $P=Z$ and hence that $P$ is regular on $\Omega$.
\end{proof}

We can now proceed to prove our reduction theorem for the imprimitive case. The assumptions of the following statement will be satisfied by a minimal counterexample to Theorem \ref{thm:Main} (see the final proof in Section \ref{sec:Proof}).

\begin{thm}
\label{thm:Reduction to primitive local action}
Let $H$ be a finite group acting imprimitively on a finite vector space $V$. Consider an element $q$ of $H$ such that the subgroup $E$ generated by the $H$-conjugates of $q$ is a $p$-subgroup of $H$ and such that
\begin{equation}
\label{eq:Fix elt in every orbit}
V=\bigcup\limits_{h\in H}\c_V(q^h).
\end{equation} 
Then there exists a non-trivial system of imprimitivity $\Omega$ such that
\begin{enumerate}
\item $E$ is contained in $K_\Omega:=\bigcap_{W\in\Omega}\n_H(W)$
\item $\n_H(W)$ acts primitively on $W$, for every $W\in\Omega$.
\end{enumerate}
\end{thm}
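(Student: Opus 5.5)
The plan is to construct $\Omega$ in two stages. First we find some non-trivial system of imprimitivity $\Omega$ with $E\le K_\Omega$. Then, among all such systems, we take one with the largest number of blocks and show that it also satisfies (ii).

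Throughout we use one basic observation. Since $K_\Omega\unlhd H$, we have $E\le K_\Omega$ if and only if $q\in K_\Omega$, if and only if some (equivalently every) $H$-conjugate of $q$ lies in $K_\Omega$.

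\emph{Step 1: a system with $E\le K_\Omega$.} Since $H$ is imprimitive, it has a non-trivial decomposition $V=W_1\oplus\dots\oplus W_n$ with $n>1$ whose summands $H$ permutes transitively. Coarsen this decomposition to one, $\Omega'$, on which $H$ acts primitively: take a maximal block for the action of $H$ on $\{W_i\}$ and sum the summands inside each conjugate of it. Now suppose $q\notin K_{\Omega'}$. The image $\bar E$ of $E$ in $H/K_{\Omega'}\le\Sym(\Omega')$ is then a non-trivial normal $p$-subgroup of a faithful primitive group. By Lemma~\ref{lem:Regular actions from abelian}, $\bar E$ is regular on $\Omega'$, so every non-trivial element of $\bar E$ is fixed-point-free. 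Moreover, no conjugate of $q$ lies in $K_{\Omega'}$, so every $\bar q^h$ is non-trivial. Pick $0\ne v\in W$ for a single block $W\in\Omega'$. By \eqref{eq:Fix elt in every orbit}, $v$ is fixed by some $q^h$. Then $q^h$ must stabilize the unique block containing the support of $v$, so $\bar q^h$ fixes a point of $\Omega'$. This is a contradiction, hence $E\le K_{\Omega'}$.

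\emph{Step 2: refinement.} Choose a non-trivial system $\Omega$ with $E\le K_\Omega$ and with $|\Omega|$ maximal. Suppose $N:=\n_H(W)$ acts imprimitively on some $W\in\Omega$. Apply the coarsening of Step 1 inside $N$ on $W$: this gives a decomposition $W=U_1\oplus\dots\oplus U_m$ with $m>1$ and $N$ primitive on $\Delta:=\{U_j\}$. Let $\Omega_1$ be the set of $H$-conjugates of the $U_j$. This is a system of imprimitivity of $V$ strictly refining $\Omega$, by the standard transitivity of induction: $V$ is induced from $W$, and $W$ is induced from $U_1$. By maximality of $|\Omega|$ we must have $q\notin K_{\Omega_1}$, and we aim for a contradiction.

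Since all conjugates of $q$ lie in $K_\Omega\le N$, the set $X:=q^H$ is an $N$-stable subset of $N$ generating $E\unlhd N$. The hypothesis restricts to $W$ as $W=\bigcup_{x\in X}\c_W(x)$. The image of $E$ in $N/\ker(N\to\Sym(\Delta))$ is a normal $p$-subgroup of a primitive group, so by Lemma~\ref{lem:Regular actions from abelian} it is either trivial or regular.

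\emph{The main obstacle.} We must show that every $x\in X$ acts non-trivially on $\Delta$. Then the single-block argument of Step 1, applied to a vector $0\ne u\in U_1$, gives the contradiction. The difficulty is that $X$ may split into several $N$-classes, and a given conjugate $q^h$ might permute the $\Omega_1$-blocks inside some other $W'=Wg$ while acting trivially on those inside $W$.

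To handle this, first note that $q\notin K_{\Omega_1}$ means that for each $h$ the element $q^h$ moves some $\Omega_1$-block, lying inside some $Wg$. Conjugating by $g^{-1}$ moves this situation back into $W$: the element $q^{hg^{-1}}\in X$ acts non-trivially on $\Delta$, so the image of $E$ on $\Delta$ is non-trivial and hence regular.

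Then I would replace the single-block vector by a vector $v=\sum_{g} u_g$ with one non-zero component $u_g$ in one $U$-block inside each $Wg$ (for a transversal of $N$ in $H$). By \eqref{eq:Fix elt in every orbit}, $v$ is fixed by some $x\in X$. Since $x\in K_\Omega$, it must fix each component $u_g$. Hence $x$ fixes a point of $\Delta^{g}$ for every $g$. By regularity of the image of $E$ on each $\Delta^g$ (conjugate the lemma over to $Wg$), $x$ then acts trivially on every $\Delta^g$, i.e.\ $x\in K_{\Omega_1}$. This contradicts $q\notin K_{\Omega_1}$. I expect making this last step rigorous, namely the transfer of regularity to all conjugate blocks simultaneously, to be the main point requiring care.
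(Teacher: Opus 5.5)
Your proof is correct and follows essentially the paper's route. Lemma~\ref{lem:Regular actions from abelian} applied to a coarsest system gives $E\le K_\Omega$. For a finest such $\Omega$, a refinement inside a block is then excluded using the vector $\sum_g u_g$ and the regularity of $E$ on the conjugate block systems $\Delta^g$. The only difference is that you show $E$ acts non-trivially on $\Delta$ before choosing the fixing conjugate, whereas the paper reads this off from that conjugate afterwards.

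The one step you skip, which the paper includes, is that $\n_H(W)$ is irreducible on $W$: otherwise $\bigoplus_{g}(W')^g$ over a transversal would be a proper $H$-invariant subspace. You need this both for primitivity in (ii) and for $\n_H(W)$ to permute the $U_j$ transitively.
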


\begin{proof}
Denote by $\mathcal{S}$ the set of all non-trivial systems of imprimitivity $\Sigma$, for the action of $H$ on $V$, such that $E\leq K_\Sigma:=\bigcap_{U\in \Sigma}\n_H(U)$. First, we claim that $\mathcal{S}$ is not empty. For this, take a maximal non-trivial system of imprimitivity $\Sigma$ and observe that $H$ is primitive on $\Sigma$. If $E$ is not contained in $K_\Sigma$, then we deduce that $EK_\Sigma/K_\Sigma$ is a non-trivial $p$-subgroup of the group $H/K_\Sigma$ acting faithfully and primitively on $\Sigma$. By Lemma \ref{lem:Regular actions from abelian} we know that $EK_\Sigma/K_\Sigma$ is regular on $\Sigma$. It follows that $\n_E(U)\leq K_\Sigma$ for every block $U$ of $\Sigma$ and therefore, for every $0\neq u\in U$, we must have $\c_E(u)\leq K_\Sigma$. Now, by \eqref{eq:Fix elt in every orbit} there exists an $H$-conjugate $y$ of $q$ such that $y$ centralizes $u$. In particular, $y$ is contained in $\c_E(u)\leq K_{\Sigma}$. Since $K_{\Sigma}$ is normal in $H$, then all $H$-conjugates of $y$ are contained in $K_{\Sigma}$. Therefore $E\leq K_{\Sigma}$, a contradiction. 

Next, observe that $\mathcal{S}$ is finite, because $V$ is finite, and that it admits a poset structure induced by refinement of systems of imprimitivity. In particular there exists a system of imprimitivity $\Omega$ that is minimal in $\mathcal{S}$, that is, a system of imprimitivity $\Omega\in\mathcal{S}$ that admits no refinements in $\mathcal{S}$. In the remaining part of this proof we will show that $\Omega$ satisfies the conditions required in the above statement. First observe that (i) is satisfied by the definition of $\mathcal{S}$. Moreover, if $W\in \Omega$ and $W'$ is a non-trivial $\n_H(W)$-invariant subspace of $W$, then
\[\bigoplus\limits_{h\in\mathcal{T}}(W')^h\]
where $h\in H$ runs over a set of representatives $\mathcal{T}$ for the $\n_H(W)$-cosets in $H$, is a non-trivial $H$-invariant subspace of $V$. But $H$ is irreducible on $V$, hence we obtain a contradiction and the action of $\n_H(W)$ on $W$ must be irreducible.

It remains to show that $\n_H(W)$ is primitive on $W$ for every $W\in \Omega$. Assume this is not the case, fix $W\in \Omega$, and let $\Sigma_W$ be a non-trivial system of imprimitivity for the action of $\n_H(W)$ on $W$. Assume furthermore that $\Sigma_W$ is chosen to be maximal so that the action of $\n_H(W)$ on $\Sigma_W$ is primitive. We can extend $\Sigma_W$ to a non-trivial system of imprimitivity for the action of $H$ on $V$ by setting
\[\Sigma_V:=\bigcup\limits_{h\in \mathcal{T}}(\Sigma_W)^h.\]
Observe now that $\Sigma_V$ is a refinement of $\Omega$ and that the minimality of $\Omega$ in $\mathcal{S}$ implies that $\Sigma_V$ does not belong to $\mathcal{S}$. In other words, we have that $E$ is not contained in $K_{\Sigma_V}$. Next, fix $0\neq u\in U\in \Sigma_W$ and set
\[v:=\sum\limits_{h\in \mathcal{T}}u^h\neq 0.\]
By \eqref{eq:Fix elt in every orbit}, there exists an $H$-conjugate $y$ of $q$ that centralizes $v$. On the other hand, since $y\in E\leq K_\Omega$ we deduce that $(u^h)^y=u^h$, for every $h\in \mathcal{T}$. It follows that
\begin{equation}
\label{eq:Reduction to primitive local action, I}
(U^h)^y=U^h, \text{ for every }h\in\mathcal{T}.
\end{equation}
Remember that $E$ is not contained in $K_{\Sigma_V}$, hence $y$ cannot fix all blocks of $(\Sigma_W)^h$ for every $h\in\mathcal{T}$. Therefore, there exists $g\in\mathcal{T}$ such that $y$ does not belong to $(K_{\Sigma_W})^g$, where $K_{\Sigma_W}:=\bigcap_{U\in\Sigma_W}\n_{\n_H(W)}(U)$. Applying \eqref{eq:Fix elt in every orbit} with $h=g$, we get 
\begin{equation}
\label{eq:Reduction to primitive local action, II}
U^z=U
\end{equation}
for $z:=y^{g^{-1}}\in E$. Therefore, we have found an element $z\in E$ such that $z$ fixes the block $U$ of $\Sigma_W$ and $z$ does not belong to $K_{\Sigma_W}$. On the other hand, by Lemma \ref{lem:Regular actions from abelian}, we know that $1\neq EK_{\Sigma_W}/K_{\Sigma_W}$ is regular on $\Sigma_W$, contradicting \eqref{eq:Reduction to primitive local action, II}. This shows that $\n_H(W)$ is primitive on $W$ and the proof is now complete.
\end{proof}

Next, let $H$ be a finite group acting linearly on a vector space $V$. For a subspace $W$ of $V$ we denote by $\pi_W:\n_H(W)\to \n_H(W)/\c_H(W)$ the canonical projection. Then, for any subset $X$ of $H$, we set $X_W:=\pi_W(X\cap \n_H(W))$. With this notation, we have the following properties of a system of imprimitivity satisfying the conditions of Theorem \ref{thm:Reduction to primitive local action}.

\begin{lem}
\label{lem:Properties of primitive local action}
Let $H$ be a finite group acting imprimitively on a finite vector space $V$. Consider an element $q$ of $H$ satisfying \eqref{eq:Fix elt in every orbit} and let $\Omega$ be a non-trivial system of imprimitivity for the action of $H$ on $V$ such that $q$, and hence all its $H$-conjugates, belongs to $K_\Omega$. Then, for every $W\in \Omega$, we have:
\begin{enumerate}
\item For every $H$-conjugate $y$ of $q$, we have
\begin{equation}
\label{eq:Properties of primitive local action, I}
W=\bigcup\limits_{a\in H_W}\c_W(\pi_W(y)^a)
\end{equation}
\item If $\n_H(W)$ acts primitively on $W$, then $H_W$ acts primitively and faithfully on $W$.
\end{enumerate}
\end{lem}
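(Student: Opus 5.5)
For (i), fix $W\in\Omega$, an $H$-conjugate $y$ of $q$, and a vector $0\neq w\in W$; the zero vector is fixed by everything, so it needs no argument. Since $q\in K_\Omega$ and $K_\Omega\unlhd H$, every $H$-conjugate of $q$ lies in $K_\Omega\le \n_H(W)$, so $\pi_W(y)$ is defined. By \eqref{eq:Fix elt in every orbit} there is $h\in H$ with $q^h$ centralizing $w$. Writing $q^h=y^{h'}$ for a suitable $h'\in H$, we get that $y^{h'}$ fixes $w$ and lies in $K_\Omega$.

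The main point is to move $h'$ into $\n_H(W)$. I would use the decomposition $V=\bigoplus_{U\in\Omega}U$. Since $h'$ permutes the blocks, $W^{(h')^{-1}}=:W_0\in\Omega$. Then $y$ fixes the vector $w^{(h')^{-1}}\in W_0$, but that is not yet a statement about $W$.

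To fix this, I would replace the test vector by a sum over the transitive $H$-set $\Omega$, as in the proof of Theorem \ref{thm:Reduction to primitive local action}. Let $\mathcal{T}$ be coset representatives of $\n_H(W)$ in $H$, and set
\[v:=\sum_{t\in\mathcal{T}}w^t.\]
By \eqref{eq:Fix elt in every orbit}, some conjugate $y^{k}$ (with $k\in H$) centralizes $v$. Because $y^k\in K_\Omega$ preserves every block, it centralizes each component $w^t$. Choose $t\in\mathcal{T}$ with $W^{t}=W^{k^{-1}}$, which exists by transitivity. Then $tk\in\n_H(W)$, and $y^k$ fixes $w^t$. It follows that $y^{kt^{-1}}$ fixes $w$, where $a_0:=(tk)^{-1}\in\n_H(W)$ and $y^{kt^{-1}}=y^{a_0^{-1}}$ up to the usual convention on conjugation; I will need to check this bookkeeping carefully. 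Projecting, $\pi_W(y)^{a}$ with $a=\pi_W(a_0^{\pm1})\in H_W$ centralizes $w$, since $\c_H(W)$ acts trivially on $W$ and $\pi_W$ is compatible with the action. This gives \eqref{eq:Properties of primitive local action, I}. This index-chasing step is where I expect the only real care to be needed.

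For (ii), $H_W=\n_H(W)/\c_H(W)$ acts faithfully on $W$ by construction. Irreducibility of $\n_H(W)$ on $W$ is part of "acting primitively" (and is also shown in the proof of Theorem \ref{thm:Reduction to primitive local action}). Any $H_W$-invariant subspace or system of imprimitivity of $W$ is $\n_H(W)$-invariant, since the two groups have the same image in $\GL(W)$. Hence primitivity of $\n_H(W)$ on $W$ transfers verbatim to $H_W$.
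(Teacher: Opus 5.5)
Your proof is correct and is essentially the paper's: you build a test vector from $w$ over a transversal $\mathcal{T}$ of $\n_H(W)$, get a conjugate of $q$ fixing it from \eqref{eq:Fix elt in every orbit}, and split the fixed-point condition block by block using $y\in K_\Omega$. The paper packages this through the $H$-invariant set $\mathcal{A}$, so that $y$ itself fixes some element and one reads off the component at $h=1$, which avoids the index-chasing; (ii) is the same observation about faithfulness. The one slip is the choice of $t$: $y^{kt^{-1}}$ fixes $w$ for every $t\in\mathcal{T}$, but the element that must lie in $\n_H(W)$ is the conjugating element $kt^{-1}$, not $tk$, so take $t$ with $k\in\n_H(W)t$ (i.e.\ $W^t=W^k$ rather than $W^{k^{-1}}$), and then $\pi_W(y)^{\pi_W(kt^{-1})}$ centralizes $w$.
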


\begin{proof}
Let $0\neq w\in W$. Fix a representative set $\mathcal{T}$ for the $\n_H(W)$-cosets in $H$, including the identity element, and consider the set $\mathcal{A}$ of elements of the form
\[\sum\limits_{h\in \mathcal{T}}u_h\]
where, for every $h\in\mathcal T$, the element $u_h$ is of the form
$(w^g)^h$ for some $g\in\n_H(W)$. Since $\mathcal{A}$ is $H$-invariant it must be a union of $H$-orbits on $V$. On the other hand by \eqref{eq:Fix elt in every orbit} it follows that $y$ fixes an element in each $H$-orbit on $V$. Therefore, there is some element $v=\sum_{h\in\mathcal{T}}u_h$ in $\mathcal{A}$ that is fixed by $y$. Since $y\in K_\Omega$, we deduce that $y$ fixes $u_h$ for every $h\in\mathcal{T}$. In particular for $h=1\in\mathcal{T}$, we deduce that $y$ fixes the $\n_H(W)$-conjugate $u$ of $w$. This shows that some $\n_H(W)$-conjugate of $y$ fixes $w$. In other words, we have proved that $w$ belongs to $C_W(\pi_W(y)^a)$ for some $a\in H_W$. Thus \eqref{eq:Properties of primitive local action, I} holds. For the remaining property it suffices to notice that the action of $H_W=\n_H(W)/\c_H(W)$ is faithful on $W$, while the other conditions required in (ii) are included in the assumption.
\end{proof}

\section{Coherence and non-vanishing elements}
\label{sec:Coherence}

In this section, we identify a mechanism to produce zeroes of characters. This will be used to obtain a contradiction in the final proof. Recall that for a finite group $H$ acting linearly on a vector space $V$, and for a subspace $W$ of $V$, we denote by $\pi_W:\n_H(W)\to \n_H(W)/\c_H(W)$ the canonical projection. We denote by $X_W:=\pi_W(X\cap \n_H(W))$ for any subset $X$ of $H$. For an element $y\in \n_H(W)$, we simply write $\pi_W(y)$ instead of $\pi_W(\{y\})$.

\begin{defin}
\label{def:Support}
With the above notation, suppose furthermore that $\Omega$ is a system of imprimitivity of $V$. Recall that $K_\Omega$ is the kernel of the action of $H$ on $\Omega$, that is, the intersection of all $\n_H(W)$ for $W\in \Omega$. For every $y\in K_\Omega$, we define the \textit{support} of $y$ to be the set
\[\supp(y):=\left\lbrace W\in\Omega\,\middle|\, \pi_W(y)\neq 1\right\rbrace.\]
Now, fix an element $q$ of $H$ contained in $K_\Omega$ and observe that, since $K_\Omega$ is normal in $H$, every $H$-conjugate of $q$ is also contained in $K_\Omega$. Then, for every $W\in\Omega$, we define the \textit{support} of $W$ relative to the $H$-class of $q$ to be the set
\[\supp(W):=\left\lbrace \pi_W(y)\,\middle|\,\pi_W(y)\neq 1,\, y \text{ is $H$-conjugate to }q \right\rbrace.\]
Finally, if $E$ denotes the subgroup of $H$ generated by all the $H$-conjugates $y$ of $q$, then we denote by $E'_W$ the derived subgroup of $E_W=E/\c_E(W)$ and set
\[\supp(W)':=\left\lbrace E_W'a\,\middle|\,a\in\supp(W)\right\rbrace.\]
\end{defin}

With the above notation, we now introduce the notion of a coherent quadruple. This notion will subsequently be shown to produce zeroes for some characters of $H$.


\begin{defin}
\label{def:Coherence}
Let $H$ be a finite group acting linearly on the finite vector space $V$. Let $q$ be an element of $H$ and denote by $E$ the normal subgroup of $H$ generated by the $H$-conjugates of $q$. Let $\Omega$ be a system of imprimitivity with $E\leq K_\Omega$. We say that the quadruple $(H,V,q,\Omega)$ is \textit{coherent} if the following conditions are satisfied:
\begin{enumerate}
\item[(C.1)]  For every $W\in \Omega$ with $\supp(W)$ not empty, the set $\supp(W)'$ has at least two elements and we have
\[\c_W(a)\cap \c_W(b)\neq 0\hspace{15pt} \Longrightarrow \hspace{15pt} E'_Wa=E'_Wb\]
for every $a,b\in\supp(W)$.
\item[(C.2)] There exists an $H$-invariant linear character $\delta$ of $E'$ such that, for all $H$-conjugates $y$ and $z$ of $q$, we have
\[\delta([y,z])\neq 1\]
whenever the cardinality of the set 
\[\mathcal{C}(y,z):=\left\lbrace W\in\Omega\,\middle|\, W\in\supp(y)\cap \supp(z), \, E'_Wy\neq E'_Wz\right\rbrace\]
is odd.
\end{enumerate}
\end{defin}

Next, we show how the coherence condition from Definition \ref{def:Coherence} can be exploited to obtain an important information on non-vanishing elements. This will be the crucial step in the final proof.

\begin{thm}
\label{thm:Coherence Theorem}
Let $H$ be a finite group acting imprimitively on the finite vector space $V$. Let $q$ be an element of $H$ satisfying \eqref{eq:Fix elt in every orbit} and denote by $E$ the subgroup of $H$ generated by the $H$-conjugates of $q$. Let $\Omega$ be a system of imprimitivity such that $E\leq K_\Omega$. Suppose furthermore that $(H,V,q,\Omega)$ is coherent. If $q$ is a non-vanishing element of $H$, then $q$ belongs to $\c_H(V)$.
\end{thm}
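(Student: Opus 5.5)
The plan is to argue by contraposition. Assume $q\notin\c_H(V)$ and produce some $\chi\in\Irr(H)$ with $\chi(q)=0$.

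\textbf{Step 1 (reduction to a character of $E$).} Let $\delta$ be the $H$-invariant linear character of $E'$ given by (C.2). Since $\delta$ is in particular $E$-invariant, for $c\in E'$ and $e\in E$ we have $\delta([c,e])=\delta(c^{-1}c^e)=1$. Hence $[E',E]\le\ker\delta$, so $E'/\ker\delta$ is central in $E/\ker\delta$.

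Take any $\theta\in\Irr(E\mid\delta)$. Then $\theta_{E'}$ is a multiple of $\delta$, and $\theta(xc)=\delta(c)\theta(x)$ for all $x\in E$ and $c\in E'$. Now let $y,z\in E$ and write $y^z=y[y,z]$. Class-invariance of $\theta$ gives
\[\theta(y)=\theta(y^z)=\delta([y,z])\,\theta(y).\]
So $\theta(y)=0$ as soon as $\delta([y,z])\neq1$ for some $z\in E$.

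Suppose we can show $\theta(y)=0$ for every $H$-conjugate $y$ of $q$. Take any $\chi\in\Irr(H)$ lying over $\theta$. By Clifford's theorem $\chi_E$ is a multiple of a sum of $H$-conjugates $\theta^h$, and $\theta^h(q)=\theta(q^{h^{-1}})=0$. Hence $\chi(q)=0$, contradicting that $q$ is non-vanishing. Because $\delta$ is $H$-invariant, and by symmetry under $H$-conjugation, it suffices to find one $H$-conjugate $z$ of $q$ with $\delta([q,z])\neq1$. By (C.2) it is enough to find such a $z$ with $|\mathcal C(q,z)|$ odd.

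\textbf{Step 2 (using $V$ to build $z$).} Since $q\notin\c_H(V)$ and $q\in K_\Omega$, some block $W_0\in\Omega$ lies in $\supp(q)$. By (C.1), $\supp(W_0)'$ contains a coset $E'_{W_0}b$ different from $E'_{W_0}\pi_{W_0}(q)$. The idea is to manufacture $z$ through the covering property \eqref{eq:Fix elt in every orbit}, as follows.

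1. Choose a vector $v=\sum_{W\in\Omega}w_W$ with $w_W\in W$.
2. Let $z$ be an $H$-conjugate of $q$ fixing $v$. Since $z\in K_\Omega$, it fixes each $w_W$.
3. On any block $W$ where $\pi_W(z)\neq1$ and $w_W$ is fixed by some $a\in\supp(W)$, condition (C.1) forces $E'_W\pi_W(z)=E'_Wa$.

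So we prescribe the $w_W$ as follows, and the cosets of $z$ are then forced blockwise.
- On $W_0$, take $w_{W_0}$ fixed by $b$.
- On the other blocks of $\supp(q)$, take $w_W\in\c_W(\pi_W(q))$ when this is nonzero.

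Where $\pi_W(z)\neq1$, the coset of $z$ then agrees with that of $q$ except at $W_0$. This would make $\mathcal C(q,z)=\{W_0\}$, which has odd cardinality. Lemma \ref{lem:Properties of primitive local action}(i) will be used to guarantee that each chosen $w_W$ is indeed fixed by some element of $\supp(W)$ lying in the required coset.

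\textbf{Main obstacle.} Two things are not controlled by the construction above.
- We must ensure that $\pi_{W_0}(z)\neq1$, i.e. that $W_0\in\supp(z)$. A vector fixed by $b$ is also fixed by the identity, so the covering condition alone does not prevent $z$ from acting trivially on $W_0$.
- We must handle blocks $W$ of $\supp(q)$ on which $\c_W(\pi_W(q))=0$.

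My first attempt is a parity argument over $\mathbb F_2$. Consider the function $z\mapsto|\mathcal C(q,z)|\bmod 2$ over the conjugates $z$ obtained from various choices of $v$. Vary the single component $w_{W_0}$ among vectors fixed by different cosets, keeping the other components fixed. Then compare two resulting conjugates $z_1,z_2$, which differ in coset at $W_0$ but agree elsewhere, and show that their parities differ, so one of them has $|\mathcal C(q,z_i)|$ odd. If triviality on $W_0$ remains an issue, I would instead apply (C.2) to a pair of conjugates $z_1,z_2$ directly and then use the $E$-invariance of $\delta$ to transfer the non-vanishing commutator back to $q$.
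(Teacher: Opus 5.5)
\textbf{Verdict: genuine gap.} You correctly identify the main obstacle (forcing $W_0\in\supp(z)$), but your proposal does not resolve it.

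\textbf{Step 1 and the second obstacle.} Your Step 1 is correct and matches the paper's character-theoretic step. If $\delta([q,g])\neq 1$ for some $g\in E$, then every $\chi\in\Irr(H)$ over $\delta$ vanishes at $q$. By $H$-invariance of $\delta$ (not merely $E$-invariance, as you write at the end), it suffices to find \emph{any} two $H$-conjugates $y,z$ of $q$ with $|\mathcal{C}(y,z)|$ odd. Your second ``obstacle'' is not a real one. By Lemma \ref{lem:Properties of primitive local action}(i), each $a\in\supp(W)$ fixes a vector in every $H_W$-orbit on $W$, so $\c_W(a)\neq 0$. Hence you may fix $0\neq v_{W,a}\in\c_W(a)$ for all $W$ and all $a\in\supp(W)$.

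\textbf{Why the parity argument fails.} With $v=\sum_W v_{W,a_W}$, the conjugate $z$ fixing $v$ has its cosets pinned down only on $\supp(z)$. Nothing in your construction prevents $z$ from acting trivially on $W_0$. Suppose $z_1,z_2$, obtained from two choices of $w_{W_0}$, both act trivially on $W_0$. Then $\mathcal{C}(q,z_1)$, $\mathcal{C}(q,z_2)$ and $\mathcal{C}(z_1,z_2)$ are all empty, so no parity difference appears. What is missing is a device forcing the second conjugate to be nontrivial on $W_0$.

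\textbf{The missing idea: a minimal subcover.} The paper gets this device as follows.
\begin{enumerate}
\item Let $\mathcal{S}$ be the set of tuples $(E'_Wa_W)_{W\in\Omega}$ with $a_W\in\supp(W)$. For each conjugate $y$, let $\mathcal{S}_y$ be the set of tuples agreeing with the cosets of $y$ on $\supp(y)$. Your vector construction shows that the sets $\mathcal{S}_y$ cover $\mathcal{S}$.
\item Choose a subfamily $\mathcal{Y}$ of conjugates of minimal size that still covers $\mathcal{S}$. Pick $y_0\in\mathcal{Y}$. By minimality there is a tuple $f$ covered by $y_0$ and by no other member of $\mathcal{Y}$. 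Pick $W_0\in\supp(y_0)$, which is nonempty since $q\notin\c_H(V)$.
\item Change the $W_0$-coordinate of $f$ to a different coset, which is possible by (C.1), to obtain $f'$. Let $z\in\mathcal{Y}$ cover $f'$.
\item Then $z\neq y_0$. Moreover $W_0\in\supp(z)$, since otherwise $z$ would also cover $f$.
\item On every other block of $\supp(y_0)\cap\supp(z)$, the cosets of $y_0$ and $z$ both equal those of $f$. Hence $\mathcal{C}(y_0,z)=\{W_0\}$, which has odd size.
\end{enumerate}
Condition (C.2) together with your Step 1 then finishes the proof. This argument produces the pair $(y_0,z)$ rather than a pair involving $q$ itself. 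That is why the transfer via $H$-invariance of $\delta$ is genuinely needed.
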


\begin{proof}
We assume that $q$ acts non-trivially on $V$ and use the conditions in Definition \ref{def:Coherence} to construct an irreducible character $\chi$ of $H$ such that $\chi(q)=0$. First, we observe that $\supp(y)$ is non-empty for every $H$-conjugate $y$ of $q$. For if $\supp(y)$ is empty, then $\pi_W(y)=1$ for every $W\in \Omega$ and therefore $y$ acts trivially on $V$. Since $C_H(V)$ is a normal subgroup of $H$, we deduce that $q$ acts trivially on $V$ against our assumption. Furthermore, we also have that $\supp(W)$ is non-empty for every $W\in \Omega$. In fact, take $U\in \supp(q)$ and notice that $\pi_U(q)\in \supp(U)$. Since the action of $H$ on $\Omega$ is transitive we can find $h\in H$ such that $U^h=W$ and hence $\pi_W(q^h)$ belongs to $\supp(W)$. In particular (C.1) of Definition \ref{def:Coherence} holds for every $W\in \Omega$. Furthermore, we know from Lemma \ref{lem:Properties of primitive local action} (i) that \eqref{eq:Properties of primitive local action, I} holds for every $W\in \Omega$ and every $H$-conjugate $y$ of $q$, that is, $\pi_W(y)$ fixes an element in every $H_W$-orbit on $W$.

Next, for every $W\in\Omega$ and every $a\in \supp(W)$, we fix an element $0\neq v_{W,a}\in \c_W(a)$. We claim that
\begin{equation}
\label{eq:Coherence Theorem, I}
v_{W,a}\in\c_W(b) \hspace{15pt}\Longrightarrow\hspace{15pt} E'_Wa=E'_Wb
\end{equation}
for all $b\in\supp(W)$. First, observe that $a$ fixes an element in every $H_W$-orbit on $W$ and therefore $\c_W(a)\neq 0$. Then, if $b\in\supp(W)$ and $v_{W,a}$ is centralized by $b$, then $\c_W(a)\cap \c_W(b)\neq 0$ and therefore $E'_Wa=E'_Wb$ by (C.1).

Next, we claim that
\begin{equation}
\label{eq:Coherence Theorem, II}
\delta([q,g])=1
\end{equation}
for every $g\in E$. Consider any irreducible character $\vartheta$ of $E$ lying above the linear character $\delta$ of $E'$ given by (C.2) of Definition \ref{def:Coherence}. If $\rho$ is a representation of $E$ affording the character $\vartheta$, then we have $\rho(x)=\delta(x)I$ for every $x\in E'$ since $\delta$ is $E$-invariant. Hence $\vartheta(gx)=\vartheta(g)\delta(x)$ for every $g\in E$ and $x\in E'$. In particular, using the equality $q^g=q[q,g]$, we deduce that
\begin{equation}
\label{eq:Coherence Theorem, III}
\vartheta(q)=\vartheta(q^g)=\vartheta(q)\delta([q,g])
\end{equation}
for every irreducible character $\vartheta$ of $E$ lying above $\delta$ and for every $g\in E$. Suppose now that \eqref{eq:Coherence Theorem, II} is false. Then \eqref{eq:Coherence Theorem, III} implies $\vartheta^h(q)=0$ for every irreducible character $\vartheta$ lying above $\delta$. In particular, since $\delta$ is $H$-invariant, we have $\vartheta^h(q)=0$ for every $h\in H$. If we now take any irreducible character $\chi$ of $H$ lying above $\vartheta$, then we obtain
\[\chi(q)=[\chi_E,\vartheta]\sum\limits_h\vartheta^h(q)=0\]
where $h$ runs over a representative set for the $H_\vartheta$-cosets in $H$. This contradicts our assumption that $q$ is non-vanishing and hence \eqref{eq:Coherence Theorem, II} must hold. Noticing that $[q^h,g]=[q,g^{h^{-1}}]^h$ and that $\delta$ is $H$-invariant, we deduce also that
\begin{equation}
\label{eq:Coherence Theorem, IV}
\delta([y,g])=1
\end{equation}
for every $H$-conjugate $y=q^h$ of $q$ and every $g\in E$. In particular, applying condition (C.2) of Definition \ref{def:Coherence}, we conclude that $|\mathcal{C}(y,z)|$ is even for all $H$-conjugates $y$ and $z$ of $q$.

Consider now an element $a_W$ of $\supp(W)$, for every $W\in \Omega$. We claim that there exists some $H$-conjugate $y$ of $q$ such that
\[E'_W\pi_W(y)=E'_Wa_W\]
for every $W\in \supp(y)$. To prove this fact, consider the elements $v_{W,a}$ fixed above and define the vector 
\[v:=\sum_{W\in \Omega}v_{W,a_W}.\]
It follows from \eqref{eq:Fix elt in every orbit} that there is some $H$-conjugate $y$ of $q$ such that $v^y=v$ and, since $y\in K_\Omega$, we obtain $v_{W,a_W}^y=v_{W,a_W}$ for all $W\in\Omega$. This, together with \eqref{eq:Coherence Theorem, I}, implies that $E'_Wa_W=E'_W\pi_W(y)$ for every $W\in \supp(y)$, as claimed. We now define the set
\[\mathcal{S}:=\left\lbrace (E'_Wa_W)_{W\in\Omega} \,\middle|\,a_W\in\supp(W)\right\rbrace\]
and, for every $H$-conjugate $y$ of $q$, the subset
\[\mathcal{S}_y:=\left\lbrace (E'_Wa_W)_{W\in\Omega}\in\mathcal{S} \,\middle|\,E'_Wa_W=E'_W\pi_W(y), \text{ for every }W\in\supp(y)\right\rbrace\]
of those tuples of $E'_W$-cosets $(E'_Wa_W)_{W\in\Omega}$ that are determined by $y$ on the indices contained in $\supp(y)$. We have just proved that the family $(\mathcal{S}_y)_y$, for $y$ running over all $H$-conjugates of $q$, covers the whole set $\mathcal{S}$. We now fix once and for all a minimal cover: namely we consider a subset $\mathcal{Y}$ of $H$-conjugates of $q$ such that $\mathcal{S}$ is covered by the family $(\mathcal{S}_y)_{y\in\mathcal{Y}}$ and $\mathcal{Y}$ has minimal size among all such subsets. We now fix $y_0\in\mathcal{Y}$ and observe that, by the minimality of $\mathcal{Y}$, there exists some tuple
\begin{equation}
\label{eq:Coherence Theorem, V}
f=(E'_Wa^f_W)_{W\in\Omega}\in\mathcal{S}_{y_0}\setminus\bigcup\limits_{y_0\neq y\in\mathcal{Y}}\mathcal{S}_y.
\end{equation}
Fix $W_0\in\supp(y_0)$ and let $y'_0$ be an $H$-conjugate of $q$ such that
\[E'_{W_0}\pi_{W_0}(y_0)\neq E'_{W_0}\pi_{W_0}(y'_0).\]
This is possible because by (C.1) of Definition \ref{def:Coherence} we know that $\supp(W_0)'$ has at least two elements. We now define a new element $f':=(E'_Wa^{f'}_W)_{W\in\Omega}$ of $\mathcal{S}$ by setting $a_W^{f'}:=a_W^f$ for all $W\neq W_0$ and
\[a^{f'}_{W_0}:=\pi_{W_0}(y_0').\]
By the choice of $\mathcal{Y}$ we can find $z\in \mathcal{Y}$ such that the tuple $f'$ belongs to $\mathcal{S}_z$. Observe that from the definition of $f'$ we cannot have $f'\in \mathcal{S}_{y_0}$ and hence $z\neq y_0$. Furthermore, we must have $W_0\in \supp(z)$ since otherwise we would have $f\in \mathcal{S}_z$, contradicting \eqref{eq:Coherence Theorem, V}. To conclude, we claim that $\mathcal{C}(y_0,z)=\{W_0\}$. This will contradict the fact that $|\mathcal{C}(y_0,z)|$ is even which, in turn, will contradict the initial assumption that $q$ does not belong to $\c_H(V)$. First, observe that $W_0$ belongs to $\mathcal{C}(y_0,z)$ because $W_0$ is contained in $\supp(y_0)\cap \supp(z)$ and
\[E'_{W_0}\pi_{W_0}(y_0)\neq E'_{W_0}\pi_{W_0}(y_0')=E'_{W_0}\pi_{W_0}(z).\]
On the other hand, if $W_0\neq U\in \supp(y_0)\cap \supp(z)$, then
\[E'_U\pi_U(z)=E'_Ua_U^{f'}=E'_Ua_U^{f}=E'_U\pi_U(y_0)\]
by the definition of $f'$. This proves our claim and the proof is now complete.
\end{proof}

\section{Verification of coherence for the minimal primitive families}
\label{sec:Verification}

We now verify that the coherence conditions from Definition \ref{def:Coherence} are satisfied under some additional assumptions. These additional assumptions will lead us to a final verification for a small set of cases described by Wolf in \cite[Theorem 2.1]{Wolf14}.

\begin{thm}
\label{thm:Verification of coherence}
Let $H$ be a finite solvable group acting faithfully and imprimitively on the finite vector space $V$. Consider an involution $q$ of $\F(H)$ satisfying \eqref{eq:Fix elt in every orbit} and a system of imprimitivity $\Omega$ such that $q\in K_\Omega$ and $\n_H(W)$ acts primitively on $W$, for every $W\in\Omega$. Denote by $E$ the $2$-subgroup of $H$ generated by the $H$-conjugates of $q$. Then, every $W\in \Omega$ has non-empty support $\supp(W)$ and:
\begin{enumerate}
\item The set $\supp(W)'$ has at least two elements;
\item For every $a,b\in\supp(W)$, we have 
\[\c_W(a)\cap\c_W(b)\neq 0 \hspace{15pt}\Longleftrightarrow\hspace{15pt} a=b;\]
\item There is a linear character $\epsilon_W$ of $E'_W$ such that
\[\epsilon_W([a,b])=\begin{cases}
1,\, & E'_Wa=E'_Wb
\\
-1,\, & E'_Wa\neq E'_Wb
\end{cases}\]
for all $a,b\in\supp(W)$.
\item For every $h\in H$ we have
\begin{equation}
\label{eq:Verification of coherence, I}
\epsilon_{W^h}(x^h)=\epsilon_W(x)
\end{equation}
for every $x\in E'_W$ and where $\epsilon_{W^h}$ is the linear character of $E'_{W^h}$ constructed in (iii) for the block $W^h$ of $\Omega$.
\end{enumerate}
\end{thm}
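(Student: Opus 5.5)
Throughout, fix $W\in\Omega$ and work in the primitive faithful group $H_W$ acting on $W$, given by Lemma \ref{lem:Properties of primitive local action}(ii). The normal $2$-subgroup $E_W\unlhd H_W$ is generated by the $H_W$-class(es) of elements $\pi_W(y)$. By Lemma \ref{lem:Properties of primitive local action}(i), each such element fixes a nonzero vector in every $H_W$-orbit on $W$.

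\emph{Non-emptiness of $\supp(W)$.} Since $q$ acts faithfully and non-trivially, some block lies in $\supp(q)$. Transitivity of $H$ on $\Omega$ then transports this to every $W$, exactly as in the proof of Theorem \ref{thm:Coherence Theorem}.

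\emph{Structure of $E_W$.} The main tool is the classical structure theory of solvable primitive linear groups (Suprunenko/Manz--Wolf). Every normal abelian subgroup of $H_W$ is cyclic, and $\F(H_W)$ is a central product of a cyclic group with extraspecial-type groups. Hence $E_W$, being a normal $2$-subgroup, is either cyclic or of extraspecial/symplectic type: $Z\circ R$ with $R$ extraspecial of order $2^{1+2m}$ and $Z$ cyclic.

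The cyclic case must be excluded. A nontrivial cyclic normal subgroup of a primitive group acts fixed-point-freely, so $\pi_W(y)\neq1$ would have $\c_W(\pi_W(y))=0$. This contradicts Lemma \ref{lem:Properties of primitive local action}(i).

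So $E_W$ is non-abelian, $E'_W=\Phi$-type $=\langle -1\rangle$ has order $2$, and $\epsilon_W$ is the faithful character of $E'_W$. The conditions that the elements $a\in\supp(W)$ are involutions with nonzero fixed space, and that such elements fix a vector in every orbit, are exactly the strong constraints Wolf analyses in \cite[Theorem 2.1]{Wolf14}. I plan to invoke that result to reduce to his short list of cases. In each of these cases I expect $E_W=Q_8$ or a small central product (say $D_8$, or $Q_8\circ C_4$), acting on $W$ with $\dim W=2$ over $\FF_3$ or similar, and with the class of $\pi_W(q)$ consisting of non-central involutions.

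\emph{Checking (i)--(iii) case by case.} With the explicit list in hand, the verification is direct:
\begin{itemize}
\item[(i)] $\supp(W)$ is an $H_W$-class that is not contained in a single coset of $E'_W$, because it generates the non-abelian group $E_W$.
\item[(ii)] Two distinct non-central involutions in $\supp(W)$ either are $a$ and $-a$, or do not commute. In the first case their fixed spaces are the $\pm1$ eigenspaces, which intersect trivially. In the second case $\langle a,b\rangle$ is dihedral containing $-1$, and a common fixed vector would be fixed by $[a,b]=-1$, which is impossible. I would verify both possibilities on each listed case.
\item[(iii)] In the extraspecial setting, $[a,b]$ equals $1$ or $-1$ according to whether $a,b$ commute. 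One must check that commuting elements of $\supp(W)$ lie in the same $E'_W$-coset: in these small cases the commuting non-central involutions in the class are precisely $a$ and $-a=az$.
\end{itemize}

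\emph{Equivariance (iv).} Conjugation by $h$ induces an isomorphism $E_W\to E_{W^h}$ sending $E'_W$ to $E'_{W^h}$ and $\supp(W)$ to $\supp(W^h)$. The formula in (iii) characterizes $\epsilon$ uniquely on $E'_W=\langle[a,b]\rangle$, and indeed $E'_W$ has order $2$. Therefore $\epsilon_{W^h}(x^h)=\epsilon_W(x)$.

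The main obstacle is the middle step: pinning down precisely which primitive configurations occur. This requires matching the fixed-point hypothesis with Wolf's classification, together with ruling out larger extraspecial $R$. For the latter, when $m\ge2$ one can choose commuting non-central involutions $a,b$ with $ab\notin\langle a,z\rangle$ that share fixed vectors, which breaks (ii).
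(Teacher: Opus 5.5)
There is a genuine gap in your structural step, and it carries over into (iii). You assert that $E_W$ is either cyclic or a central product $Z\circ R$ with $Z$ cyclic and $R$ extraspecial. From this you conclude $|E'_W|=2$ and $[a,b]\in\{\pm1\}$ for $a,b\in\supp(W)$.

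For $p=2$ this is not what the structure theory gives. A $2$-group all of whose characteristic abelian subgroups are cyclic may also have a dihedral, semidihedral or generalized quaternion central factor of order at least $16$. This is exactly what happens in the first family of Wolf's list \cite[Theorem 2.1]{Wolf14}, which the paper has to treat:
\begin{itemize}
\item $W=\mathbb{F}_{r^2}$ with $r$ a Mersenne prime, and $H_W=P\times C\le\Gamma(W)$ with $P$ semidihedral of order $4(r+1)$;
\item $\supp(W)=\{\alpha^{2i}\sigma\}_{0\le i\le r}$ and $E_W=\langle\alpha^2,\sigma\rangle\cong D_{2(r+1)}$;
\item $E'_W=\langle\alpha^4\rangle$ is cyclic of order $(r+1)/2$, and $[\alpha^{2i}\sigma,\alpha^{2j}\sigma]=\alpha^{4(i-j)}$.
\end{itemize}
Already for $r=7$, the commutator $[\sigma,\alpha^2\sigma]$ has order $4$. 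So your uniform verification of (iii) (``$[a,b]=\pm1$ according as $a,b$ commute'') and the claim $|E'_W|=2$ fail for every $r\ge 7$. What is needed there is an explicit computation: $E'_W\alpha^{2i}\sigma=E'_W\alpha^{2j}\sigma$ if and only if $i\equiv j\pmod 2$. Then take $\epsilon_W$ to be the order-$2$ character of the cyclic group $E'_W$, which gives $\epsilon_W([\alpha^{2i}\sigma,\alpha^{2j}\sigma])=(-1)^{i-j}$.

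Your proposed exclusion of extraspecial $R$ of order $2^{1+2m}$ with $m\ge 2$ is also false, and it would discard a genuine case. Wolf's third family has $|W|=3^4$ and $\F(H_W)=Q_8\circ D_8\cong 2^{1+4}_-$. There the ten non-central involutions form five pairs $\{a,az\}$ that pairwise do not commute, since the minus-type quadratic form on $\mathbb{F}_2^4$ has no totally singular $2$-space. So no commuting pair of the kind you describe exists. Your expected list (``$\dim W=2$ over $\mathbb{F}_3$ or similar'') is therefore not Wolf's list. Since everything must be checked against that list, the case analysis is the substance of the proof rather than a formality.

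Some of your pieces do hold in general:
\begin{itemize}
\item Your argument for (i) is valid. If $\supp(W)$ lay in one coset of $E'_W$, then $E_W/E'_W$ would be cyclic, forcing the $2$-group $E_W$ to be cyclic, which you correctly excluded.
\item Your uniqueness argument for (iv) works without $|E'_W|=2$. Since $\supp(W)$ is $H_W$-invariant, the commutators $[a,b]$ with $a,b\in\supp(W)$ already generate $E'_W$.
\item For (ii), the paper avoids your commuting/non-commuting dichotomy entirely. It uses Wolf's conclusion that $|\c_{\F(H_W)}(w)|=2$ for every $0\ne w\in W$, which gives $a=b$ at once.
\end{itemize}
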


\begin{proof}
Since $q$ is an involution contained in $\F(H)$, the subgroup $E$ of $H$ must be contained in $\o_2(H)$ and it is therefore a $2$-subgroup of $H$ contained in $K_\Omega$. As in the previous section, for $W\in\Omega$, we let $\pi_W:\n_H(W)\to \n_H(W)/\c_H(W)$ be the canonical projection and define $X_W:=\pi_W(X\cap \n_H(W))$ for any subset $X$ of $H$. With this notation, observe that
\begin{equation}
\label{eq:Verification of coherence, II}
E_W=\langle\supp(W)\rangle
\end{equation}
for every $W\in\Omega$. Next, notice that for any $H$-conjugate $y$ of $q$ the support $\supp(y)$ is not empty. For if $y$ centralizes every block $W$ in $\Omega$, then $y$ centralizes $V$. However the action of $H$ on $V$ is faithful and therefore $y=1$, contradicting the assumption that $q$ is an involution. From this we deduce that the support $\supp(W)$ is not empty for every $W\in \Omega$. In fact, we know that there exists some $U\in\supp(q)$, and thus $\pi_U(q)\in \supp(U)$. On the other hand, since $H$ is transitive on $\Omega$, we can find $h\in H$ such that $U^h=W$ and hence $\pi_W(q^h)\in\supp(W)$. 

By Lemma \ref{lem:Properties of primitive local action} we know that $H_W$ acts faithfully and primitively on $W$, and that \eqref{eq:Properties of primitive local action, I} holds for every $\pi_W(y)\in\supp(W)$. Moreover, $E_W$ is a normal $2$-subgroup of $\F(H_W)$ generated by the involutions $a\in\supp(W)$. This shows that, for every $a\in\supp(W)$, the triple $(H_W,W,a)$ satisfies the hypotheses of \cite[Theorem 2.1]{Wolf14}. In particular $\c_{\F(H_W)}(w)$ has order $2$, for every $0\neq w\in W$. Hence, if $a,b\in \supp(W)$ and $0\neq w\in \c_W(a)\cap \c_W(b)$, then $a=b$. This proves (ii) from the statement. We will now prove properties (i) and (iii) by analyzing each of the three possibilities described in \cite[Theorem 2.1]{Wolf14}.

\underline{\textbf{Case 1.}} Let $r$ be a Mersenne prime, $W=\mathbb{F}_{r^2}$, and denote by $\Gamma(W)$ the semilinear group consisting of transformations $x\mapsto ax^\tau$ on $W$, for $a\in \mathbb{F}_{r^2}^\times$ and $\tau\in\Gal(\mathbb{F}_{r^2}/\mathbb{F}_r)\simeq C_2$. We fix a generator $\sigma$ of $\Gal(\mathbb{F}_{r^2}/\mathbb{F}_r)$, so that $x^\sigma=x^r$, for every $x\in W$. Then, $\Gamma(W)\simeq \mathbb{F}_{r^2}^\times\rtimes \langle\sigma\rangle\simeq C_{r^2-1}\rtimes C_2$. We have that $H_W=\F(H_W)=P\times C\subseteq \Gamma(W)$ where $P$ is a Sylow $2$-subgroup of $\Gamma(W)$ and $C$ is a cyclic group of odd order. Letting $\alpha\in \mathbb{F}_{r^2}^\times$ be any element of order $2(r+1)$, we deduce that $P$ is semidihedral of order $4(r+1)$ (remember that $r+1$ is a power of $2$ since $r$ is a Mersenne prime) and can be described as follows
\[P=\langle\alpha,\sigma\mid\alpha^{2(r+1)}=1=\sigma^2, \, \sigma\alpha\sigma=\alpha^r\rangle.\]
On the other hand observe that $C$ must be contained in $\z(\Gamma(W))=\mathbb{F}_r^\times$. In order to describe the support $\supp(W)$ and hence the group $E_W$, we need to compute the involutions of $H_W$. There is a unique central involution, namely $\alpha^{r+1}$, and $r+1$ non-central involutions of the form
\[\alpha_i:=\alpha^{2i}\sigma, \,\text{ for }i=0,\dots, r.\]
By Schur's lemma the central involution acts by scalar multiplication $-1$ on $W$, and hence $\c_W(\alpha^{r+1})=0$. Since all elements $a=\pi_W(y)\in \supp(W)$ satisfy \eqref{eq:Properties of primitive local action, I}, we conclude that $\supp(W)$ must be contained in the set of non-central involutions $\{\alpha_i\}_i$. On the other hand, since $\supp(W)$ is $H_W$-invariant and $\{\alpha_i\}_i$ consists of a single $H_W$-conjugacy class, we deduce that $\supp(W)=\{\alpha_i\}_i$. Therefore $E_W$ is the subgroup of $P$ generated by the non-central involutions, and hence
\[E_W=\langle \alpha^2,\sigma\rangle\simeq D_{2(r+1)}\]
the dihedral group of index $2$ in $P$. In particular, $E_W'=\langle\alpha^4\rangle$ is a cyclic group of order $(r+1)/2$ and we deduce that, for every $a=\alpha_i$ and $b=\alpha_j$ in $\supp(W)$, we have
\[E'_Wa=E'_Wb \hspace{15pt}\Longleftrightarrow\hspace{15pt} i\equiv j\pmod{2}.\]
In particular $|\supp(W)'|\geq 2$ and so (i) holds in this case. To prove (iii), let $\epsilon_W$ be the unique linear character of $E'_W$ of order $2$ with $\epsilon_W(\alpha^4)=-1$. Then, for every $a=\alpha_i$ and $b=\alpha_j$ in $\supp(W)$, we have
\[\epsilon_W([a,b])=(-1)^{i-j}=\begin{cases}
1,\, & E'_Wa=E'_Wb
\\
-1,\, & E'_Wa\neq E'_Wb
\end{cases}\]
as required by (iii).

\underline{\textbf{Case 2.}} In this case we have $|W|=5^2$, $H_W/\F(H_W)$ isomorphic to either $C_3$ or $S_3$, and $\F(H_W)=QZ$ the central product of the two normal subgroups $Q\simeq Q_8$ and $Z\simeq C_4$ of $H_W$ with $Q\cap Z=\Z(Q)$. We take generators $x,y$ of $Q$, so that
\[Q=\langle x,y\mid x^4=1, x^2=y^2, y^{-1}xy=x^{-1}\rangle\] 
and a generator $z$ of $Z$ and observe that $x^2=y^2=z^2$ generates $\Z(Q)$. As above, we compute the involutions of $QZ$. Since $Z=\Z(F(H_W))$, there is a unique central involution $z^2$ in $QZ$. The remaining non-central involutions of $QZ$ are 
\[xz,\, yz, \,xyz, \,xz^3, \,yz^3, \,xyz^3.\]
Arguing as in the previous case, observe that $z^2$ acts as the scalar $-1$ on $W$, so that $\c_W(z^2)=0$. From \eqref{eq:Properties of primitive local action, I}, we deduce that $\supp(W)$ must be contained in the set of non-central involutions. Furthermore, observe that the latter set is a single $H_W$-orbit. In fact, we know that $QZ$ has three classes of non-central involutions
\[\{xz,\, xz^3\}, \,\{yz,\, yz^3\},\,\{xyz,\, xyz^3\}\]
and that any element of order $3$ in $H_W$ permutes these three classes. As in the previous case, we deduce that $\supp(W)$ coincides with the set of non-central involutions. We then obtain $E_W=\langle\supp(W)\rangle=QZ$ and $E'_W=\langle z^2\rangle$. From this we immediately obtain $|\supp(W)'|=3\geq 2$ proving (i) for this case. Finally, letting $\epsilon_W$ be the unique linear character of order $2$ of $E'_W$ and observing that $\epsilon_W(z^2)=-1$, we directly compute that for every $a,b\in \supp(W)$ we have the equality required in (iii).

\underline{\textbf{Case 3.}} In this final case, we have $|W|=3^4$, $H_W/\F(H_W)$ isomorphic to $C_5$, $D_{10}$, or $F_{20}$ (observe that the groups $A_5$ and $S_5$ are excluded since $H$ is solvable in our setting), and $\F(H_W)$ the central product of $Q\simeq Q_8$ and $D\simeq D_8$. We can take generators $x,y$ of $Q$ and $f,g$ of $D$ so that
\[Q=\langle x,y\mid x^4=1,\, x^2=y^2,\,y^{-1}xy=x^{-1} \rangle\]
and
\[D=\langle f,g\mid f^4=1,\, g^2=1,\,gfg=f^{-1} \rangle\]
with $Q\cap D=\langle x^2\rangle=\langle f^2\rangle$. Once again, we compute the involutions of $\F(H_W)$. There is a unique central involution $x^2$ which, as in the previous cases, acts as the scalar $-1$ on $W$, so that $\C_W(x^2)=0$. We then must have $\supp(W)$ contained in the set of non-central involutions by \eqref{eq:Properties of primitive local action, I}. There are $10$ non-central involutions:
\[xf,\, yf, \,xyf, \,g, \,fg, \,xfx^2, \,yfx^2, \,xyfx^2, \,gx^2, \,fgx^2.\]
As before we observe that all non-central involutions of $\F(H_W)$ are $H_W$-conjugate: first we observe that there are five classes of involutions in $\F(H_W)$
\[\{xf,\, xfx^2\}, \,\{yf,\, yfx^2\},\,\{xyf,\, xyfx^2\}, \,\{g,\, gx^2\},\,\{fg,\, fgx^2\}\]
and then notice that any element of order $5$ in $H_W$ will transitively permute these five classes. In particular, we deduce that $\supp(W)$ coincides with the set of non-central involutions of $\F(H_W)$ and that therefore $E_W=QD$ and $E'_W=\langle x^2\rangle$. This implies that $|\supp(W)'|=5\geq 2$ proving (i). Moreover, taking $\epsilon_W$ to be the non-trivial linear character of $E'_W$ and observing that $\epsilon_W(x^2)=-1$ we can directly verify the equality required in (iii).

We finally prove (iv). First observe that if $\supp(W)$ is not empty, then so is $\supp(W^h)$ for every $h\in H$. Therefore applying $(iii)$ to $W^h$ we get a linear character $\epsilon_{W^h}$. In all cases, the character $\epsilon_W$ is uniquely defined: it is the unique linear character of $E'_W$ of order $2$. Therefore, $(\epsilon_W)^h$ must be the unique linear character of order $2$ of $(E'_W)^h=E'_{W^h}$ and \eqref{eq:Verification of coherence, I} follows.
\end{proof}

In the following Corollary we finally verify the conditions from Definition \ref{def:Coherence}.

\begin{cor}
\label{cor:Verification of coherence}
With the hypotheses of Theorem \ref{thm:Verification of coherence} we have that the quadruple $(H,V,q,\Omega)$ is coherent.
\end{cor}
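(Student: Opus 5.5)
Condition (C.1) follows directly from Theorem \ref{thm:Verification of coherence}. By that theorem every $W\in\Omega$ has non-empty support. Part (i) says that $\supp(W)'$ has at least two elements. Part (ii) says that $\c_W(a)\cap\c_W(b)\neq 0$ forces $a=b$, and hence $E'_Wa=E'_Wb$. So the real work is (C.2): we must build an $H$-invariant linear character $\delta$ of $E'$ out of the local characters $\epsilon_W$.

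\textbf{Defining $\delta$.} For $x\in E'$, we have $x\in K_\Omega$, so for every $W\in\Omega$ the image $\pi_W(x)$ lies in $\pi_W(E')=E'_W$. This uses that $\pi_W$ restricted to $E$ is a surjection onto $E_W$, so derived subgroups map onto derived subgroups. We then set
\[\delta(x):=\prod_{W\in\Omega}\epsilon_W(\pi_W(x)).\]
This is a product of linear characters composed with homomorphisms $E'\to E'_W$, so $\delta$ is a linear character of $E'$. For $H$-invariance, take $h\in H$. Then $\pi_{W^h}(x^h)=\pi_W(x)^h$ under the identification $\n_H(W)^h=\n_H(W^h)$, and \eqref{eq:Verification of coherence, I} gives
\[\delta(x^h)=\prod_W\epsilon_{W^h}(\pi_{W^h}(x^h))=\prod_W\epsilon_W(\pi_W(x))=\delta(x).\]
The last equality holds because $W\mapsto W^h$ permutes $\Omega$.

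\textbf{Evaluating $\delta([y,z])$.} Let $y,z$ be $H$-conjugates of $q$. Since $\pi_W$ is a homomorphism, $\pi_W([y,z])=[\pi_W(y),\pi_W(z)]$. We split the factors $\epsilon_W$ according to $W$:
\begin{enumerate}
\item If $W\notin\supp(y)\cap\supp(z)$, one of $\pi_W(y),\pi_W(z)$ is trivial. The commutator is then trivial and the factor is $1$.
\item If $W\in\supp(y)\cap\supp(z)$, then $a=\pi_W(y)$ and $b=\pi_W(z)$ lie in $\supp(W)$. Part (iii) gives factor $-1$ exactly when $E'_Wa\neq E'_Wb$, that is, exactly when $W\in\mathcal{C}(y,z)$.
\end{enumerate}
Hence $\delta([y,z])=(-1)^{|\mathcal{C}(y,z)|}$. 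This is $\neq1$ whenever $|\mathcal{C}(y,z)|$ is odd, which is (C.2).

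\textbf{Main obstacle.} The delicate point is the bookkeeping for $\pi_W$ under conjugation and the claim $\pi_W(E')=E'_W$. The definitions of $E'_W$ and $\epsilon_W$ are stated inside $E_W=E/\c_E(W)$, and I must make sure these agree with the images under $\pi_W$. Concretely, $E\cap\c_H(W)=\c_E(W)$, so $E_W\cong E/\c_E(W)$ and $\pi_W$ restricted to $E$ is exactly this quotient map. Its image of $E'$ is therefore $(E_W)'$.

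For the conjugation identity, conjugation by $h$ maps $\n_H(W)$ to $\n_H(W^h)$ and $\c_H(W)$ to $\c_H(W^h)$. It therefore induces an isomorphism $H_W\to H_{W^h}$ with $\pi_W(x)^h=\pi_{W^h}(x^h)$. This is precisely the sense in which \eqref{eq:Verification of coherence, I} is to be read.
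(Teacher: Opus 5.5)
Your proposal is correct and is essentially the paper's proof: (C.1) comes from parts (i) and (ii), and your factor $\epsilon_W\circ\pi_W|_{E'}$ is exactly the paper's $\delta_W$ (the inflation of $\epsilon_W$ to $E'\c_E(W)$, restricted to $E'$). The $H$-invariance of $\delta=\prod_W\delta_W$ comes from (iv), and $\delta([y,z])=(-1)^{|\mathcal{C}(y,z)|}$ follows from the same case split on $\supp(y)\cap\supp(z)$; the only slip is cosmetic, since in your invariance computation the reindexing $W\mapsto W^h$ justifies the first equality and (iv) gives the second.
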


\begin{proof}
The condition (C.1) of Definition \ref{def:Coherence} follows immediately from (i) and (ii) of Theorem \ref{thm:Verification of coherence}. To prove (C.2) of Definition \ref{def:Coherence}, recall that $\supp(W)$ is not empty for every $W\in \Omega$. We let $\epsilon_W$ be the linear characters of $E'_W$ constructed in Theorem \ref{thm:Verification of coherence}. We can inflate $\epsilon_W$ to a character $\delta_W$ of $E'\C_E(W)$ with $C_E(W)$ in its kernel. Restricting this character to $E'$ we obtain a linear character $\delta_W$ of $E'$ containing $E'\cap \c_E(W)$ in its kernel. We can now define
\[\delta=\prod\limits_{W\in \Omega}\delta_W.\]
Property (iv) of Theorem \ref{thm:Verification of coherence} implies that $\delta$ is $H$-invariant. Finally, let $y$ and $z$ be $H$-conjugates of $q$ such that $|\mathcal{C}(y,z)|$ is odd. If $W$ does not belong to the intersection $\supp(y)\cap \supp(z)$, then either $y$ or $z$ centralizes $W$, hence $[y,z]$ belongs to $E'\cap C_E(W)$ and $\delta_W([y,z])=1$. We conclude that
\[\delta([y,z])=\prod\limits_{W}\delta_W([y,z])=(-1)^{|\mathcal{C}(y,z)|}=-1\]
where in the above product $W$ runs over those blocks in $\Omega$ such that $W\in\supp(y)\cap \supp(z)$. This proves the condition (C.2) of Definition \ref{def:Coherence} and the proof is now complete.
\end{proof}

\section{Proof of Theorem \ref{thm:Main}}
\label{sec:Proof}

We are finally able to prove Theorem \ref{thm:Main}.

\begin{proof}[Proof of Theorem \ref{thm:Main}]
We let $G$ be a counterexample of minimal possible order in which $x$ is a non-vanishing element not contained in the Fitting subgroup $\F(G)$. By the argument of \cite[Theorem 4.4]{INW}, taking $H:=G/\c_G(K/L)$ for a suitable chief factor $K/L$ of $G$ not centralized by $x$, $V:=\Irr(K/L)$, and $q=\c_G(K/L)x$, we deduce that $H$ is a finite solvable group acting faithfully and irreducibly on the finite vector space $V$ and that $q$ is a non-vanishing involution of $\F(H)$ satisfying \eqref{eq:Fix elt in every orbit}. In particular, the subgroup $E$ of $H$ generated by the $H$-conjugates of $q$ is a normal $2$-subgroup of $H$. By \cite[Theorem 7]{Wolf20}, we also know that the action of $H$ on $V$ cannot be primitive. We can then apply Theorem \ref{thm:Reduction to primitive local action} to obtain a system of imprimitivity $\Omega$ such that $E\leq K_\Omega$ and $\n_H(W)$ acts primitively on $W$, for every block $W$ of $\Omega$. Now by Corollary \ref{cor:Verification of coherence} we deduce that the quadruple $(H,V,q,\Omega)$ is coherent. We can finally apply Theorem \ref{thm:Coherence Theorem} to obtain that $q$ belongs to $\c_H(V)$. This contradicts the fact that $H$ is faithful on $V$, and the proof is complete.
\end{proof}

\newcommand{\etalchar}[1]{$^{#1}$}

\vspace{1cm}

(D. Rossi) {\sc{Department of Mathematics, Rutgers University, Hill Center - Busch Campus, 110 Frelinghuysen Road, Piscataway, NJ 08854-8019, USA}}

\textit{Email address:} \href{mailto:damiano.rossi@rutgers.edu}{damiano.rossi@rutgers.edu}

\end{document}